\documentclass[12pt]{article}
\usepackage[utf8]{inputenc}
\usepackage{amsmath}
\usepackage{amsthm}
\usepackage{amssymb}
\usepackage[affil-it]{authblk}

\newcommand{\PP}{\mathbb P}

\newtheoremstyle{theoremstyle}{}{}{\itshape}{}{\scshape}{.}{ }{\textbf{#1\ #2}}  
\theoremstyle{theoremstyle}
\newtheorem{theorem}{Theorem}[section]
\newtheorem{lem}{Lemma}[section]

\newtheorem{cor}{Corollary}[section]

\title{Friends of 12}
\author{\LARGE{Doyon Kim}\footnote{Key word and phrases: Abundancy ratio, abundancy index, sum of divisors, perfect numbers, friends.}\footnote{AMS(MOS) Subject Classification: 11A25}\footnote{This work was supported by NSF grant no. 1262930, and was completed during and after the 2015 summer Research Experience for Undergraduates in Algebra and Discrete Mathematics at Auburn University}
\\ \large{\textnormal{Auburn University}}
\\ \large\textnormal{dzk0028@auburn.edu}}
\date{July 2015}
\begin{document}

\maketitle
\abstract{A friend of \(12\) is a positive integer different from \(12\) with the same abundancy index. By enlarging the supply of methods of Ward [1], it is shown that
(i) if \(n\) is an odd friend of \(12\), then \(n=m^2\), where \(m\) has at least 5 distinct prime factors, including \(3\), and
(ii) if \(n\) is an even friend of \(12\) other than \(234\), then \(n=2\cdot q^e\cdot m^2\), in which \(q\) is a prime, \(e\) is a positive integer, \(29\leq q\equiv e\equiv 1\mod 4\), and \(m\) has at least 3 distinct odd prime factors, one of which is \(3\), and the other, none equal to \(q\), are greater or equal to \(29\).}

\section{The Abundancy Index}
Let \(\PP\) denote the set of positive integers. For \(n\in\PP\), let \(\sigma(n)\) denote the sum of all positive divisors of \(n\), including \(n\) itself. It is well know, and not hard to see, that \(\sigma\) is \textit{weakly multiplicative}: That is, if \(m,n\in\PP\) and \(\gcd(m,n)=1\), then \(\sigma(mn)=\sigma(m)\sigma(n)\). Therefore, if \(q_1,\dots,q_t\in\PP\) are distinct primes, and \(e_1,\dots,e_t\in\PP\), then \(\sigma(\prod_{i=1}^{t} q_i^{e_i})=\prod_{i=1}^{t}\sigma(q_i^{e_i})=\prod_{i=1}^{t}(\sum_{j=0}^{e_i} q_i^j)=\prod_{i=1}^{t}\frac{q_i^{e_i+1}-1}{q_i-1}\). For instance, \(\sigma(12)=\sigma(3)\sigma(4)=(1+3)(1+2+4)=28\). \par
The abundancy ratio, or abundancy index, of \(n\in\PP\), is \(I(n)=\frac{\sigma(n)}{n}\). From previous remarks about \(\sigma\) we have the following facts about properties of the abundancy index. These are also mentioned in [1].
\begin{itemize}
\item[1.] \(I\) is weakly multiplicative.
\item[2.] If \(q_1,\dots,q_t\in\PP\) are distinct primes, and \(e_1,\dots,e_t\in\PP\), then \(I(\prod_{i=1}^{t} q_i^{e_i})=\prod_{i=1}^{t}I(q_i^{e_i})=\prod_{i=1}^{t}\frac{\sum_{j=0}^{e_i} q_i^j}{q_i^{e_i}}=\prod_{i=1}^{t}\frac{q_i^{e_i+1}-1}{q_i^{e_i}(q_i-1)}\).
\item[3.] If \(q\in\PP\) is a prime, then, as \(e\in\PP\) increases from \(1\), \(I(q^e)\) is strictly increasing, from \(\frac{q+1}{q}\), and tends to \(\frac{q}{q-1}\) as \(e\to\infty\).
\item[4.] If \(e\in\PP\), as \(q\) increases among the positive primes, \(I(q^e)\) is strictly decreasing. 
\item[5.] If \(m,n\in\PP\) and \(m\mid n\), then \(I(m)\leq I(n)\), with equality only if \(m=n\).
\end{itemize} \par
Interest in the abundancy index arises from interest in perfect numbers, positive integers with abundancy index \(2\). Mathematicians have been curious about perfect numbers since antiquity, and the abundancy index offers a context within which to study them indirectly. Perhaps asking questions about the abundancy index will lead to the development of theory applicable to question about the perfect numbers. See [1] for more references on the abundancy index.
\section{Friends}
\ \ \ \ Positive integers \(m\) and \(n\) are friends if and only if \(m \neq n\) and \(I(m)=I(n)\). Thus, different perfect numbers are friends. As in [1], it is easy to see that \(1\) has no friend, and that no prime power has a friend. It is not known if any positive integer has infinitely many friends. \par
Every element of \(\{1,\dots,9\}\) is a prime power except for \(1\) and \(6\). \(I(1)=1\), and \(1\) has no friend; \(6\) is the smallest perfect number. (Actually, everything we would want to know about even perfect numbers, except whether or not there are infinitely many of them, is known, thanks to Euler.) Therefore, the first frontier of the study of friends is made up of the integers \(10\), \(12\), \(14\), \(15\), and \(18\). \par
In [1], Ward took on \(10\), and proved, among other things, that any friend of \(10\) must be a square with at least \(6\) distinct prime factors, including \(5\), the smallest. It is still unknown whether or not \(10\) has a friend; however, we feel that Ward has done service by pioneering methods other than "computer search" for hunting for friends of given integers. \par
The aim here is to apply Ward's methods, with a few new tricks thrown in, to the search for friends of \(12\). Computer search has already discovered \(234\) to be a friend of \(12\). This discovery can also be made rationally, by Ward-like arguments. Using these arguments, with a few twists, we shall obtain a theorem about the friends of \(12\) similar to Ward's theorem about friends of \(10\). The following lemma will be useful. The proof is straightforward.
\begin{lem}
If \(p\) is an odd prime and \(e\in\PP\), then \(\sigma(p^e)\) is odd if and only if \(e\) is even. If \(p\equiv 3\mod 4\) and \(e\) is odd, then \(4\mid \sigma(p^e)\). If \(p\equiv 1\mod 4\) then \(\sigma(p^e)\equiv e+1\mod 4\).
\end{lem}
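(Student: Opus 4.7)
The plan is to do everything by direct reduction of $\sigma(p^e)=1+p+p^2+\cdots+p^e$ (a sum of $e+1$ terms) modulo $2$ and modulo $4$, splitting into cases according to the residue of $p$ mod $4$. Since $p$ is odd, every term $p^j$ is odd, so $\sigma(p^e)$ is a sum of $e+1$ odd integers and is therefore congruent to $e+1\pmod 2$. This immediately yields the parity claim: $\sigma(p^e)$ is odd iff $e+1$ is odd iff $e$ is even.

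For the mod $4$ claims I would proceed similarly. If $p\equiv 1\pmod 4$, then $p^j\equiv 1\pmod 4$ for every $j\ge 0$, so $\sigma(p^e)\equiv\sum_{j=0}^{e}1\equiv e+1\pmod 4$, giving the third assertion directly. If $p\equiv 3\pmod 4$, then $p\equiv -1\pmod 4$, so $p^j\equiv(-1)^j\pmod 4$. When $e$ is odd I would group the $e+1$ (an even number of) terms into consecutive pairs
\[
\sigma(p^e)=\sum_{k=0}^{(e-1)/2}\bigl(p^{2k}+p^{2k+1}\bigr)=\sum_{k=0}^{(e-1)/2}p^{2k}(1+p),
\]
and observe that $1+p\equiv 0\pmod 4$, so every summand is divisible by $4$ and hence $4\mid\sigma(p^e)$.

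Since each step is a one-line congruence calculation, there is no substantive obstacle to surmount; the only care needed is the bookkeeping on the number of terms ($e+1$, not $e$) and the sign pattern when $p\equiv -1\pmod 4$. The pairing trick in the last case is the single place where a small idea (rather than pure arithmetic) enters, and it is clean because $e$ odd makes the number of terms even so the pairing is exact.
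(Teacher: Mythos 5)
Your proof is correct and is exactly the ``straightforward'' argument the paper alludes to without writing out: reduce \(\sigma(p^e)=\sum_{j=0}^{e}p^j\) modulo \(2\) and modulo \(4\) according to the residue of \(p\), with the pairing into \(p^{2k}(1+p)\) handling the case \(p\equiv 3\pmod 4\), \(e\) odd. All three assertions are verified completely and the bookkeeping on the \(e+1\) terms is handled correctly.
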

\begin{cor}
If \(n\in\PP\) and both \(n\) and \(\sigma(n)\) are odd, then \(n=k^2\) for some \(k\in\PP\). If \(n\) is even and \(\sigma(n)\) is odd, then \(n=2^fk^2\) for some \(f,k\in\PP\).
\end{cor}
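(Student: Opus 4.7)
The plan is to write $n$ in its prime factorization $n = 2^f \prod_{i=1}^{t} p_i^{e_i}$, where $f \in \N$ (with $f = 0$ in the odd case and $f \geq 1$ in the even case) and $p_1,\dots,p_t$ are the distinct odd prime divisors of $n$. By weak multiplicativity of $\sigma$, one has $\sigma(n) = \sigma(2^f)\prod_{i=1}^{t}\sigma(p_i^{e_i})$ (with the convention $\sigma(2^0)=1$). The key observation is that $\sigma(2^f) = 2^{f+1}-1$ is always odd, so the parity of $\sigma(n)$ equals the parity of $\prod_{i=1}^{t}\sigma(p_i^{e_i})$, and this product is odd precisely when each factor $\sigma(p_i^{e_i})$ is odd.

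Next I would invoke Lemma 2.1: since each $p_i$ is odd, $\sigma(p_i^{e_i})$ is odd if and only if $e_i$ is even. Combining this with the previous step, the hypothesis that $\sigma(n)$ is odd forces every $e_i$ to be even. Writing $e_i = 2f_i$, one then sets $k := \prod_{i=1}^{t} p_i^{f_i}$, so that $\prod_{i=1}^{t} p_i^{e_i} = k^2$. (If the product is empty, take $k = 1 \in \PP$.)

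Finally I would split into the two cases of the statement. If $n$ is odd, then $f = 0$ and $n = \prod_{i=1}^{t} p_i^{e_i} = k^2$, giving the first assertion. If $n$ is even, then $f \geq 1$ and $n = 2^f k^2$ with $f, k \in \PP$, giving the second assertion.

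There is really no obstacle here: the argument is a direct unpacking of the lemma together with multiplicativity of $\sigma$ and the fact that $\sigma(2^f)$ is automatically odd. The only thing to watch is the bookkeeping in the edge case where $n$ has no odd prime factors (i.e., $n$ is a power of $2$), which is handled by allowing the empty product $k = 1$.
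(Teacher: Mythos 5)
Your proof is correct and is exactly the intended derivation: the paper states the corollary without proof as an immediate consequence of Lemma 2.1, and your argument (factor out the power of $2$, note $\sigma(2^f)=2^{f+1}-1$ is odd, apply the lemma to force every odd-prime exponent to be even) is the standard way to fill in that gap. The handling of the edge case where $n$ is a power of $2$ via the empty product $k=1$ is also fine.
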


\section{Friends of 12}
\begin{theorem} If \(n\notin\{12,234\}\) is a positive integer such that \(I(n)=I(12)=I(234)=\frac{7}{3}\) and \(n\) is odd, then \(n=3^{2a}\prod_{i=1}^{k} p_i^{2e_i}\), where \(p_1, \dots, p_k\) are distinct primes greater than \(3\), \(a, e_1, \dots, e_k \in \PP\), and \(k\geq 4\). 
If \(n\notin\{12,234\}\), \(I(n)=\frac{7}{3}\) and \(n\) is even, then \(n=2\cdot 3^{2a}\cdot q^e\prod_{i=1}^{k} p_i^{2e_i}\), where \(q,p_1, \dots, p_k\) are distinct primes greater than or equal to 29, \(a,e, e_1, \dots, e_k \in \PP\), \(a\geq 3\), \(k\geq 2\), and \(q\equiv e\equiv 1 \mod 4\).
\end{theorem}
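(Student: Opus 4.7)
The strategy is to combine the parity information from Lemma~2.1 with the Diophantine identity $3\sigma(n)=7n$ and the bound $I(q^e)<q/(q-1)$ of Property~3. Since $\gcd(3,7)=1$, the identity forces $3\mid n$ in both cases.

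\textbf{Odd case.} If $n$ is odd then $7n$ is odd, so $\sigma(n)$ is odd and $n$ is a perfect square by Corollary~2.1. Lemma~2.1 then requires every prime-power factor of $n$ to have even exponent, so $n=3^{2a}\prod_{i=1}^{k} p_i^{2e_i}$ with distinct primes $p_i>3$ and $a\geq 1$. To force $k\geq 4$ I would first apply the Property~3 bound to obtain
\[
\frac{7}{3}=I(n)<\frac{3}{2}\prod_{i=1}^{k}\frac{p_i}{p_i-1},
\]
which eliminates $k\leq 2$ outright and restricts $k=3$ to a short list of small prime triples. Each remaining triple I would then kill by expanding $3(3^{2a+1}-1)\prod_i\sigma(p_i^{2e_i})=14\cdot 3^{2a-1}\prod_i p_i^{2e_i}$ and extracting forced divisibilities incompatible with the $p_i$ appearing in the triple.

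\textbf{Even case.} Write $n=2^f m'$ with $m'$ odd; the identity becomes $3(2^{f+1}-1)\sigma(m')=7\cdot 2^f m'$. I would first show $f=1$ is forced, the cases $f\geq 2$ reducing to subsidiary abundancy equations on the odd part that have no admissible solution. Then $9\sigma(m')=14m'$ gives $3^2\mid m'$ and $\sigma(m')\equiv 2\pmod 4$. Using weak multiplicativity of $\sigma$ together with Lemma~2.1, the condition $\sigma(m')\equiv 2\pmod 4$ forces exactly one prime factor $q$ of $m'$ to have odd exponent $e$, and moreover $q\equiv e\equiv 1\pmod 4$, since any prime $p\equiv 3\pmod 4$ with odd exponent would contribute a factor $\equiv 0\pmod 4$ to $\sigma(m')$, and $\sigma(q^e)\equiv e+1\pmod 4$ when $q\equiv 1\pmod 4$. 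Because $3\equiv 3\pmod 4$, the prime $3$ must appear in $m'$ with even exponent, yielding $m'=3^{2a}q^e\prod p_i^{2e_i}$ with $q,p_i>3$ distinct and $q\neq p_i$. The remaining constraints $a\geq 3$, $q\geq 29$, $p_i\geq 29$, and $k\geq 2$ I would derive by an iterative cascade: specializing $a=1$ yields a subsidiary equation whose only odd solution forces $n=234$; $a=2$ forces $11^2$ into the remaining factor but then Property~3 shows no $I(11^b)$ can match the residual abundancy; each small $q\in\{5,13,17\}$ is ruled out by an analogous cascade, and finally the Property~3 product bound gives $k\geq 2$.

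\textbf{Main obstacle.} As in Ward's treatment of $10$, the real labor lies in the numerical case analysis: the small triples for $k=3$ in the odd case, the ruling out of $f\geq 2$, and the elimination of small $a$ and small $q$ in the even case. Each such case spawns its own subsidiary abundancy equation whose impossibility I would verify by pairing the exact Diophantine condition against the Property~3 asymptotic bounds. These computations are straightforward in principle but delicate in practice, and will constitute the bulk of the argument.
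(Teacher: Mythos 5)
Your skeleton matches the paper's: the parity analysis via Lemma 2.1 and Corollary 2.1, the extraction of $q\equiv e\equiv 1\bmod 4$ from $2\,\|\,\sigma(m')$, the divisibility cascades from $\sigma(3^2)=13$ and $\sigma(3^4)=121$, and the finite case analysis for $k=3$ in the odd case are all essentially what the paper does (the paper gets $f=1$ more cheaply than you propose, by noting that $4\mid n$ together with $3\mid n$ would force $12\mid n$ and hence $I(n)>I(12)$ by Property 5). But there is one genuine gap, and it sits exactly where the paper spends more than half of its effort: your claim that ``the Property 3 product bound gives $k\geq 2$.'' Once all odd prime factors other than $3$ are known to be $\geq 29$, the one-sided bound for two such primes gives only
\[
I(n)<\frac{3}{2}\cdot\frac{3}{2}\cdot\frac{29}{28}\cdot\frac{31}{30}\approx 2.408>\frac{7}{3},
\]
so the product bound does \emph{not} exclude the case of exactly two primes $\geq 29$ (it does exclude exactly one, since $\frac{9}{4}\cdot\frac{29}{28}<\frac{7}{3}$, which is why $k\geq 1$ is easy and $k\geq 2$ is not). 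To close this case the paper must squeeze from both sides: a lower bound $I(n)\geq I(2\cdot 3^{6}q_1q_2)$ and an upper bound $I(n)<\frac{9}{4}\frac{q_1}{q_1-1}\frac{q_2}{q_2-1}$ combine into a two-sided window for $q_2$ in terms of $q_1$, confining $q_1$ to $\{29,31,37,41,43,47,53\}$; then $b\in\{6,8,10\}$ must be eliminated by the divisibility of $\sigma(3^6)=1093$, $13\mid\sigma(3^8)$, $23\mid\sigma(3^{10})$ to push $b\geq 12$ and sharpen the lower bound; and finally each of the seven values of $q_1$ requires its own bespoke argument (e.g.\ $5\mid\sigma(29)$ and $13\mid\sigma(29^2)$ force $f_1\geq 3$, whereupon the window $781<q_2<784$ contains no prime). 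Your ``Main obstacle'' paragraph lists the odd $k=3$ triples, the $f\geq 2$ elimination, and small $a$ and $q$, but omits this $k=2$ analysis entirely, which suggests you believe it is covered by the asymptotic bound; it is not, and without it the theorem's conclusion $k\geq 2$ is unproved.

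A secondary, more easily repaired issue: you propose to rule out small values only of the special prime $q$ (your list $\{5,13,17\}$ comprises the primes $\equiv 1\bmod 4$ below $29$), but the theorem asserts that \emph{every} odd prime factor other than $3$, including each squared $p_i$, is at least $29$. The paper gets this uniformly and cheaply from $\frac{7}{3}=I(n)\geq I(2\cdot 3^{6}\cdot q_1)$, which forces the smallest such prime $q_1$ to exceed $26$; you would do well to adopt that single inequality (available only after $b\geq 6$ is established) rather than running a separate cascade for each prime below $29$.
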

\begin{proof} 
Preassuming its existence, let \(n\notin\{12,234\}\) be a positive integer such that \(I(n)=\frac{7}{3}\). Since \(\frac{\sigma(n)}{n}=I(n)=\frac{7}{3}\), \(3\sigma(n)=7n\) and therefore \(3\mid n\). \par

Suppose \(n\) is odd. Since \(3\sigma(n)=7n\) and \(7n\) is odd, \(\sigma(n)\) is odd. Since both \(n\) and \(\sigma(n)\) are odd, \(n\) must be a square, by Corollary 2.1. Therefore, \(n=3^{2a}m^2\), where \(a,m\in \PP\), \(m\) is odd and \(3\nmid m\). \par 

If \(m\) is 1, then \(I(n)=I(3^{2a})<\frac{3}{2}<\frac{7}{3}\)=I(n), a contradiction. Therefore \(m>1\), so \(n=3^{2a}\prod_{i=1}^{k} p_i^{2e_i}\), where \(p_1, \dots, p_k\) are distinct primes greater than \(3\), \(a, e_1, \dots, e_k \in \PP\), and \(k\geq 1\).\par

 If \(k\leq 2\), then \(I(n)\leq I(3^{2a}5^{2e_1}7^{2e_2})<\frac{3}{2}\frac{5}{4}\frac{7}{6}<\frac{7}{3}\). Therefore \(k\geq 3\). \par

If \(k=3\), then since \[I(3^{2a}5^{2e_1}7^{2e_2}17^{2e_3})<\frac{3}{2}\frac{5}{4}\frac{7}{6}\frac{17}{16}<\frac{7}{3}\] and \[I(3^{2a}5^{2e_1}11^{2e_2}13^{2e_3})<\frac{3}{2}\frac{5}{4}\frac{11}{10}\frac{13}{12}<\frac{7}{3},\] \(p_1=5\), \(p_2=7\), and \(p_3\in \{11,13\}\). \par

Verify that \[I(3^45^27^211^2)=\frac{121}{81}\frac{31}{25}\frac{57}{49}\frac{133}{121}>\frac{7}{3}.\] Thus if \(n=3^{2a}5^{2e_1}7^{2e_2}11^{2e_3},\) then \(a=1\). But then \(7n=3\sigma(n)=3\sigma(3^25^{2e_1}7^{2e_2}11^{2e_3})=3\cdot 13\cdot\sigma(5^{2e_1}7^{2e_2}11^{2e_3})\) would imply that \(13\mid n\), which does not hold. Therefore \(p_3\neq 11\). \par

Similarly, verify that \(I(3^65^27^213^2)=\frac{1093}{729}\frac{31}{25}\frac{57}{49}\frac{183}{169}>\frac{7}{3}\). Thus if  \(n=3^{2a}5^{2e_1}7^{2e_2}13^{2e_3}\), \(a\in\{1,2\}\). If \(a=1\) then \[I(n)=I(3^25^{2e_1}7^{2e_2}13^{2e_3})=\frac{13}{9}I(5^{2e_1}7^{2e_2}13^{2e_3})<\frac{13}{9}\frac{5}{4}\frac{7}{6}\frac{13}{12}<\frac{7}{3}\] so \(a\neq 1\). If \(a=2\) then \(7n=3\sigma(n)=3\sigma(3^45^{2e_1}7^{2e_2}13^{2e_3})=3\cdot 121\cdot \sigma(5^{2e_1}7^{2e_2}13^{2e_3})\) would imply that \(11\mid n\), which does not hold. Therefore \(a\notin \{1,2\}\) and we conclude that \(k\geq 4\). \par

Therefore, if \(n\) is odd and \(I(n)=\frac {7}{3}\), then \(n=3^{2a}\prod_{i=1}^{k} p_i^{2e_i},\) where \(p_1, \dots, p_k\) are distinct primes greater than \(3\), \(a, e_1, \dots, e_k \in \PP\), and \(k\geq 4\). \par

Now, suppose \(n\) is even. Since \(3\mid n\), \(2^2\) does not divide \(n\) because if it did then \(12\) would divide \(n\) and we would have \(I(n)>I(12)=\frac{7}{3}\). So \(n=2\cdot 3^b\cdot m\), where \(b, m\in \PP\), \(m\) is odd and \(3\nmid m\). \par

If \(m=1\), then \(I(n)=I(2\cdot 3^b)=I(2)I(3^b)<\frac{3}{2}\frac{3}{2}<\frac{7}{3}\). Therefore \(m>1\), and \(n=2\cdot 3^b\prod_{i=1}^{k} q_i^{f_i}\) where \(b, f_1, \dots, f_k \in \PP\), \(k\geq 1\) and \(q_1<\dots<q_k\) are distinct primes greater than \(3\). \par

Since \(7n=3\sigma(n)=3\sigma(2\cdot 3^b\prod_{i=1}^{k} q_i^{f_i})=3\cdot 3\cdot\sigma(3^b\prod_{i=1}^{k} q_i^{f_i})\), \(2\mid \sigma(3^b\prod_{i=1}^{k} q_i^{f_i})\) but \(4=2^2\nmid \sigma(3^b\prod_{i=1}^{k} q_i^{f_i})\). \par

If all of \(b, f_1, \dots, f_k\) are even then \(\sigma(3^b\prod_{i=1}^{k} q_i^{f_i})\) would be odd, and if two or more of \(b, f_1, \dots, f_k\) are odd then we would have \(4\mid\sigma(3^b\prod_{i=1}^{k} q_i^{f_i})\). Therefore exactly one of \(b, f_1, \dots, f_k\) is odd. Further, by Lemma 2.1, if \(q\equiv 3\mod 4\) and is a prime and \(e\equiv 1\mod 2\) then \(\sigma(q^e)\equiv 0 \mod 4\) and if \(q\equiv1\mod 4\) and \(e\equiv 3\mod 4\) then \(\sigma(q^e)\equiv 0 \mod 4\). Therefore \(b\) must be even, and exactly one of \(f_i, 1\leq i \leq k\) is congruent to \(1 \mod 4\). Also, for such \(f_i\) its corresponding prime divisor \(q_i\)  is also congruent to \(1\mod 4\). That is, \(n=q^e(3m)^2\), where \(m\) is a positive odd integer, \(q\) is prime that does not divide \(m\), and \(q\equiv e\equiv 1\mod 4\). \par

If \(b=2\), then \(7n=3\sigma(n)=3\sigma(2\cdot 3^2\prod_{i=1}^{k} q_i^{f_i})=3\cdot 3\cdot 13\cdot\sigma(\prod_{i=1}^{k} q_i^{f_i})\) would imply that \(13\mid n\). But then \(234=2\cdot 3^2\cdot 13\) would divide \(n\), and therefore \(I(n)>I(234)=\frac{7}{3}\). Therefore, \(b\neq 2\). Also, if \(b=4\), then \(7n=3\sigma(n)=3\sigma(2\cdot 3^4\prod_{i=1}^{k} q_i^{f_i})=3\cdot 3\cdot 121\cdot\sigma(\prod_{i=1}^{k} q_i^{f_i})\) would imply that \(11\mid n\). But then it follows that \(I(n)\geq I(2\cdot 3^4\cdot 11)=\frac{3}{2}\frac{121}{81}\frac{12}{11}>\frac{7}{3}\), so we conclude that \(b\neq 4\). Therefore \(b\geq 6\). \par

Since \(\frac{7}{3}=I(n)\geq I(2\cdot 3^6\cdot q_1)=\frac{3}{2}\frac{3^7-1}{2\cdot3^6}\frac{q_1+1}{q_1}\), it follows that \(\frac{q_1+1}{q_1}\leq \frac{7}{3}\frac{2}{3}\frac{2\cdot3^6}{3^7-1}\) and thus \(q_1>26\). Therefore \(q_1\geq 29\). \par

If \(k=1\), then \(I(n)=I(2\cdot 3^b\cdot q_1^{f_1})\leq I(2\cdot 3^b\cdot 29^{f_1})=I(2)I(3^b\cdot 29^{f_1})<\frac{3}{2}\frac{3}{2}\frac{29}{28}<\frac{7}{3}\). Therefore \(k\geq 2\). \par

Suppose \(k=2\). Since \[\frac{7}{3}=I(n)=I(2\cdot 3^b\cdot q_1^{f_1}\cdot q_2^{f_2})> I(2\cdot 3^6\cdot q_1\cdot q_2)=\frac{3}{2}\frac{3^7-1}{2\cdot 3^6}\frac{q_1+1}{q_1}\frac{q_2+1}{q_2},\]
 we have \[\frac{q_1+1}{q_1}\frac{q_2+1}{q_2}\leq\frac{7}{3}\frac{2}{3}\frac{2\cdot3^6}{3^7-1}=\frac{1134}{1093}.\] Also, since \[\frac{7}{3}=I(n)=I(2\cdot 3^b\cdot q_1^{f_1}\cdot q_2^{f_2})=I(2)I(3^b\cdot q_1^{f_1}\cdot q_2^{f_2})<\frac{3}{2}\frac{3}{2}\frac{q_1}{q_1-1}\frac{q_2}{q_2-1},\] we have \[\frac{q_1}{q_1-1}\frac{q_2}{q_2-1}>\frac{7}{3}\frac{2}{3}\frac{2}{3}=\frac{28}{27}.\] \par

\begin{center}
\begin{align*}
\frac{q_1+1}{q_1}\frac{q_2+1}{q_2}\leq\frac{1134}{1093} &\iff (q_1+1)(q_2+1)\leq\frac{1134}{1093} q_1q_2 \\
&\iff \frac{41}{1093}q_1q_2-q_1-q_2-1\geq 0 \\
&\iff (\frac{41}{1093} q_1-1)(q_2-\frac{1093}{41})-\frac{1134}{41}\geq 0 \\
&\iff q_2 \geq \frac{1239462}{1681q_1-44813}+\frac{1093}{41},
\end{align*}
\end{center}
and
\begin{center}
\begin{align*}
\frac{q_1}{q_1-1}\frac{q_2}{q_2-1}>\frac{28}{27} &\iff \frac{27}{28} q_1q_2>(q_1-1)(q_2-1) \\
&\iff \frac{1}{28} q_1q_2-q_1-q_2+1<0 \\
&\iff (\frac{1}{28} q_1-1)(q_2-28)-27<0 \\
&\iff q_2<\frac{27}{\frac{1}{28} q_1-1}+28=\frac{756}{q_1-28}+28,
\end{align*}
\end{center}
so we have the inequality
\[
\frac{1239462}{1681q_1-44813}+\frac{1093}{41}\leq q_2< \frac{756}{q_1-28}+28. \tag{1}
\] \par

Since \(29\leq q_1<q_2\) and \(q_2<\frac{756}{q_1-28}+28\), \(29\leq q_1<\frac{756}{q_1-28}+27\). To satisfy this inequality, \(q_1\) must be in the interval \([29,55]\) and therefore \(q_1\in \{29,31,37,41,43,47,53\}\). \par

If \(b=6\), then \(7n=3\sigma(n)=3\sigma(2\cdot 3^6\cdot q_1^{f_1}\cdot q_2^{f_2})=3\cdot 3\cdot 1093\cdot\sigma(q_1^{f_1}\cdot q_2^{f_2})\) so 1093 would divide \(n\). But since \(q_2< \frac{756}{q_1-28}+28\) and \(q_1\in \{29,31,37,41,43,47,53\}\), \(q_2<784\) and so \(1093\notin \{q_1,q_2\}\). Therefore \(b\neq 6\). If \(b=8\), then since \(13\mid 9841=\sigma(3^8)\) and \(\sigma(3^8)\mid 3\sigma(n)=7n\), \(13\) would divide \(n\). However, since \(13<29\leq q_1<q_2\), \(13\) does not divide \(n\) and therefore \(b\neq 8\). Likewise,  if \(b=10\), then since \(23\mid \sigma(3^{10})=88573\), \(23\) would divide \(n\). Since \(23<29\leq q_1<q_2\), \(23\nmid n\) and therefore \(b\neq 10\). Therefore \(b\geq 12\). \par

Suppose \(q_1=29\). By the inequality (1), \(341<q_2<784\). If \(f_1=1\), then \(7n=3\sigma(n)=3\sigma(2\cdot 3^b\cdot 29^1\cdot q_2^{f_2})=3\cdot 3\cdot \sigma(3^b)\cdot 30\cdot \sigma(q_2^{f_2})\) so \(5\) would divide \(n\). Therefore \(f_1\neq 1\). Likewise, \(f_1\neq 2\) because \(13\mid 871=\sigma(29^2)\) but \(13\nmid n\). Therefore, \(f_1\geq 3\). So we have \(\frac{7}{3}=I(n)\geq I((2\cdot 3^{12}\cdot 29^3\cdot q_2)=\frac{3}{2}\frac{3^{13}-1}{2\cdot 3^{12}}\frac{29^4-1}{28\cdot 29^3}\frac{q_2+1}{q_2}\), and therefore \(781<q_2\). Since there is no prime between \(781\) and \(784\), this is a contradiction and we conclude that \(q_1\neq 29\).\par

Suppose \(q_1=31\). By (1), \(196<q_2<280\); \(f_1\) must be even because \(q_1\equiv 3\mod 4\), and \(f_1\neq 2\) because if \(f_1=2\) then \(7n=3\sigma(n)=3\sigma(2\cdot 3^b\cdot 31^2\cdot q_2^{f_2})=3\cdot 3\cdot \sigma(3^b)\cdot 993\cdot \sigma(q_2^{f_2})\) would imply that \(331\mid n\), which does not hold because \(31=q_1<q_2<280<331\). So \(f_1\geq 4\), and we have \(\frac{7}{3}=I(n)\geq I(2\cdot 3^{12}\cdot 31^4\cdot q_2)=\frac{3}{2}\frac{3^{13}-1}{2\cdot 3^{12}}\frac{31^5-1}{30\cdot 31^4}\frac{q_2+1}{q_2}\), therefore \(q_2>278\). This is a contradiction because there is no prime between \(278\) and \(280\). Therefore \(q_1\neq 31\). \par

Suppose \(q_1=37\). By (1), \(97<q_2<112\). \(f_1\neq 1\) because \(19\mid \sigma(37^1)\) but \(19\nmid n\). So \(f_1\geq 2\), and therefore \(\frac{7}{3}=I(n)\geq I(2\cdot 3^{12}\cdot 37^2\cdot q_2)=\frac{3}{2}\frac{3^{13}-1}{2\cdot 3^{12}}\frac{37^3-1}{36\cdot 37^2}\frac{q_2+1}{q_2}\). This inequality is valid only if \(q_2>110\), and this is a contradiction because there is no prime between \(110\) and \(112\). Therefore \(q_1\neq 37\). \par

Suppose \(q_1=41\). By (1), \(78<q_2<86\), or \(q_2\in \{79,83\}\). Since \(79\equiv 83\equiv 3\mod 4\) and \(q_1=41\equiv 1\mod 4\), \(f_1\equiv 1\mod 4\) and \(f_2\) is even. Verify that \(I(2\cdot 3^{12}\cdot 41\cdot 79^2)>\frac{7}{3}\), therefore \(q_2\neq 79\). Verify also that \(I(2\cdot 3^{12}\cdot 41^5\cdot 83^2)>\frac{7}{3}\), so if \(n=2\cdot 3^b\cdot 41^{f_1}\cdot 83^{f_2}\) then \(f_1=1\). But then \(I(n)=I(2\cdot 3^b\cdot 41\cdot 83^{f_2})=I(2)I(41)I(3^b83^{f_2})<\frac{3}{2}\frac{42}{41}\frac{3}{2}\frac{83}{82}<\frac{7}{3}\), therefore \(q_2\neq 83\). So \(q_2\notin \{79,83\}\), and this is a contradiction so we conclude that \(q_1\neq 41\). \par

If \(q_1=43\), \(71<q_2<78\) by (1) so \(q_2=73\), and \(f_1\) is even because \(q_1\equiv 3\mod 4\). It would follow that \(I(n)\geq I(2\cdot 3^{12}\cdot 43^2\cdot 73)=\frac{3}{2}\frac{3^{13}-1}{2\cdot 3^{12}}\frac{43^3-1}{42\cdot 43^2}\frac{74}{73}>\frac{7}{3}\); therefore \(q_1\neq 43\). \par

If \(q_1=47\), then  \(62<q_2<68\) by (1) so \(q_2=67\). But since \(47\equiv 67\equiv 3\mod 4\), \(q_1\neq 47\), for if \(q_1=47\) then \(n=2\cdot 3^b\cdot 47^{f_1}\cdot 67^{f_2}\) and \(\sigma(n)=\sigma(2\cdot 3^b\cdot 47^{f_1}\cdot 67^{f_2})\) would either be odd or divisible by 4. \par

If \(q_1=53\), then \(54<q_2<59\). Since there is no prime between \(54\) and \(59\), \(q_1\neq 53\). \par

Therefore, \(q_1\notin \{29,31,37,41,43,47,53\}\). This is contradictory to the assumption that \(k=2\), so we conclude that \(k\geq 3\). \par

Therefore, \(n=2\cdot 3^{2a}\cdot q^e\prod_{i=1}^{k} p_i^{2e_i}\), where \(q,p_1, \dots, p_k\) are distinct primes greater or equal to 29, \(a,e, e_1, \dots, e_k \in \PP\), \(a\geq 3\), \(k\geq 2\), and \(q\equiv e\equiv 1 \mod 4\).

\end{proof}


\begin{thebibliography}{9}
\bibitem{ward}
Jeffrey Ward, Does ten have a friend?, \textit{International Journal of Mathematics and Computer Science} 3(2008), no. 3, 153-158.
\end{thebibliography}
\end{document}